\documentclass[12pt,a4paper,reqno]{amsart}

\usepackage[active]{srcltx}
\usepackage{a4wide}
\usepackage{amssymb}
\usepackage{graphicx}
\usepackage{float}
\usepackage{colortbl}

\theoremstyle{plain}
\newtheorem{theorem}{Theorem}[section]
\newtheorem{maintheorem}{Theorem}
\newtheorem{lemma}[theorem]{Lemma}
\newtheorem{proposition}[theorem]{Proposition}
\newtheorem{corollary}[theorem]{Corollary}


\theoremstyle{definition}

\newtheorem*{condition}{Condition}

\theoremstyle{remark}
\newtheorem{remark}[theorem]{Remark}

\def\R{\ensuremath{\mathbb R}}
\def\N{\ensuremath{\mathbb N}}

\def\I{\ensuremath{{\bf 1}}}
\def\e{\ensuremath{\text e}}

\def\B{\ensuremath{\mathcal BC}}

\def\P{\ensuremath{\mathcal P}}
\def\F{\ensuremath{\mathcal F}}

\def\QQ{\ensuremath{\mathcal Q}}

\numberwithin{equation}{section}

\begin{document}


\title[Extreme values
for Benedicks-Carleson quadratic maps]{Extreme values for
Benedicks-Carleson\\ quadratic maps}

\author[A. C. M. Freitas]{Ana Cristina Moreira Freitas}
\address{Ana Cristina Moreira Freitas\\ Centro de Matem\'{a}tica \&
Faculdade de Economia da Universidade do Porto\\ Rua Dr. Roberto Frias \\
4200-464 Porto\\ Portugal} \email{amoreira@fep.up.pt}

\author[J. M. Freitas]{Jorge Milhazes Freitas}
\address{Jorge Milhazes Freitas\\ Centro de Matem\'{a}tica da Universidade do Porto\\ Rua do
Campo Alegre 687\\ 4169-007 Porto\\ Portugal}
\email{jmfreita@fc.up.pt}
\urladdr{http://www.fc.up.pt/pessoas/jmfreita}

\date{\today}
\thanks{Work partially supported by FCT through CMUP and POCI/MAT/61237/2004.}
\keywords{Dynamics, Logistic family, Stochastic Processes, Extreme
Value Theory, Benedicks-Carleson parameters} \subjclass[2000]{37A50,
37C40, 37E05, 60G10, 60G70}

\begin{abstract}
We consider the quadratic family of maps given by $f_{a}(x)=1-a x^2$
with $x\in [-1,1]$, where $\emph a$ is a Benedicks-Carleson
parameter. For each of these chaotic dynamical systems we study the
extreme value distribution of the stationary stochastic processes
$X_0,X_1,\ldots$, given by $X_{n}=f_a^n$, for every integer
$n\geq0$, where each random variable $X_n$ is distributed according
to the unique absolutely continuous, invariant probability of $f_a$.
Using techniques developed by Benedicks and Carleson, we show that
the limiting distribution of $M_n=\max\{X_0,\ldots,X_{n-1}\}$ is the
same as that which would apply if the sequence $X_0,X_1,\ldots$ was
independent and identically distributed. This result allows us to
conclude that the asymptotic distribution of $M_n$ is of Type III
(Weibull).
\end{abstract}

\maketitle

\setcounter{tocdepth}{1}


\section{Introduction}\label{sec:introduction}

The dynamical systems we consider in this work are the quadratic
maps given by $f_a(x)=1-ax^2$ on $I=[-1,1]$, with $a\in\B$, where
$\B$ is the Benedicks-Carleson parameter set introduced in
\cite{BC85}. The set $\B$ has positive Lebesgue measure and is built
in such a way that, for every $a\in\B$, the Collet-Eckmann condition
holds: there is exponential growth of
the derivative of $f_a$ along the critical orbit, i.e., there is
$c>0$ such that
\[
\left|\left(f_a^n\right)'(f_a(0))\right|\geq \e^{cn},
\]
for all $n\in\N$. This property guarantees not only the
non-existence of an attracting periodic orbit but also the existence
of an ergodic $f_a$-invariant probability measure $\mu_a$ that is
absolutely continuous with respect to Lebesgue measure on $[-1,1]$.
In fact, these Benedicks-Carleson systems are chaotic and highly
sensitive on initial conditions. Actually, after some iterates, the
behavior of most orbits becomes erratic and distributed on the set
$[-1,1]$ according to the invariant measure $\mu_a.$
 Hence, it is meaningful to study the statistical
properties of the orbits of these systems and, here, we are
particularly concerned with their extreme type behavior.

A natural way to build a stationary stochastic process associated to
$f_a$, for some $a\in\B$, is to consider the random variable (r.v.)
$X_0$ defined on the probability space $([-1,1],\mu_a)$, taking
values on $[-1,1]$ with distribution function (d.f.)
$G_a(x)=\mu_a\{(-\infty,x]\cap[-1,1]\}$ and then iterate it by
$f_a$, i.e., define
\begin{equation}
\label{eq:def-process} X_n=f_a(X_{n-1})=f_a^{n}(X_0),\; \mbox{ for
each $n\in\N$}.
\end{equation}
This way, we obtain a stationary stochastic process
$X_0,X_1,X_2,\ldots$ with common marginal d.f. given by
$G_a(x)=\mu_a\left\{ X_0\leq x\right\}$. The stationarity results
from the $f_a$-invariance of the probability measure $\mu_a$ (see
for example \cite[Section~15.4]{KT66}). Our goal is to study the
asymptotic distribution of the partial maximum
\begin{equation}
\label{eq:def-maximum} M_n=\max\left\{X_0, X_1,\ldots,
X_{n-1}\right\},
\end{equation}
when properly normalized.

The study of distributional properties of the higher order
statistics of a sample, like the maximum of the sample is
the purpose of Extreme Value Theory. Its major classical theorem
asserts that there are only three types of non-degenerate asymptotic
distributions for the maximum of an independent and identically
distributed (i.i.d.) sample under linear normalization.

The main result of this work states that the limiting law of $M_n$
is the same as if $X_0, X_1,\ldots$ were independent with the same
d.f. $G_a$. In fact, we verify that under appropriate normalization
the asymptotic distribution of $M_n$ is of type III (Weibull). As
usual, we denote by $G_a^{-1}$ the generalized inverse of the d.f.
$G_a$, which is to say that $G_a^{-1}(y):=\inf\{x:\,G_a(x)\geq y\}$.
\begin{maintheorem}
\label{th:A-tipo-1} For each $a\in\B$ and every stationary
stochastic process $\left(X_i\right)_{i\in\N_0}$ given by
\eqref{eq:def-process}, we have:
\[
P\{a_n(M_n-1)\leq x\}\rightarrow H(x)=\begin{cases}
\e^{-(-x)^{1/2}}&,x\leq0\\
1&,x>0
\end{cases},
\]
where $a_n=\left(1-G_a^{-1}\left(1-\frac1n\right)\right)^{-1}$.
\end{maintheorem}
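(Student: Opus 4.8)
The strategy is to apply the standard extreme value machinery for stationary sequences, due to Leadbetter, via conditions $D(u_n)$ and $D'(u_n)$. Recall that if the stationary process $(X_i)$ satisfies these two mixing-type conditions for a suitable sequence of levels $u_n$, then $P\{M_n\le u_n\}$ has the same limit as in the i.i.d.\ case, namely $\lim_{n\to\infty} P\{M_n\le u_n\}=\e^{-\tau}$ whenever $n\,P\{X_0>u_n\}\to\tau$. So the proof breaks into three parts: (i) identify the correct normalizing levels $u_n=u_n(x)=1+x/a_n$ and show $n(1-G_a(u_n))\to (-x)^{1/2}=:\tau$ for $x\le 0$; (ii) verify condition $D(u_n)$; and (iii) verify condition $D'(u_n)$.

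For part (i), I would first observe that the maximum of the orbit is attained near the critical value $f_a(0)=1$, since $G_a(x)=1$ for $x\ge 1$ and the density of $\mu_a$ has a known square-root singularity type behaviour near $1$ (the invariant density blows up like $\dist(x,1)^{-1/2}$ near the critical value, inherited from the non-invertibility of $f_a$ at the critical point $0$). Concretely, $1-G_a(1-\varepsilon)=\mu_a((1-\varepsilon,1])\sim K\sqrt{\varepsilon}$ as $\varepsilon\to 0^+$ for some constant $K>0$. This gives, by the definition of $a_n$, that $n(1-G_a(1+x/a_n))\to (-x)^{1/2}$, which fixes the form of $H$ and simultaneously shows that the i.i.d.\ limit would indeed be Type III (Weibull) with shape parameter $1/2$. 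The regular variation of $1-G_a$ at $1$ should follow from the bounded variation / explicit control of the invariant density provided by the Benedicks-Carleson construction.

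For part (iii), condition $D'(u_n)$ requires $\lim_{k\to\infty}\limsup_{n\to\infty} n\sum_{j=1}^{\lfloor n/k\rfloor} P\{X_0>u_n,\,X_j>u_n\}=0$. Here the key point is that $\{X_0>u_n\}$ is a small neighbourhood $U_n$ of the critical value $1$, and $\{X_j>u_n\}=f_a^{-j}(U_n)$; one must show that orbits starting in $U_n$ do not return to $U_n$ too quickly. This is exactly where the Benedicks-Carleson technology enters: the expansion along the critical orbit (Collet-Eckmann) together with the bound-distortion and ``binding period'' estimates control the length and distribution of $f_a^{-j}(U_n)$, ensuring that $\mu_a(U_n\cap f_a^{-j}(U_n))$ is summably small. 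This is the main obstacle and the heart of the paper; I would split the sum according to whether $j$ is in the binding period associated to $U_n$ or in the ``free'' period afterward, using the exponential growth of derivatives to bound the measure of early returns and mixing/decay of correlations for the invariant measure (which holds for these maps, e.g.\ by Young tower arguments) to bound the later terms.

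For part (ii), condition $D(u_n)$ — an asymptotic independence of $\{M \text{ over an initial block}\}$ and $\{M \text{ over a later block}\}$ separated by a gap — I would derive it from decay of correlations of $\mu_a$ against suitable (BV or Hölder) observables, which is available for Benedicks-Carleson maps. Since $D(u_n)$ only needs the mixing rate to beat the block length after an intermediate gap $t_n=o(n)$, even a polynomial rate suffices, so this step is comparatively routine once the decay-of-correlations input is in hand. Finally, assembling (i)--(iii) and invoking Leadbetter's theorem yields $P\{a_n(M_n-1)\le x\}=P\{M_n\le u_n(x)\}\to \e^{-\tau}=\e^{-(-x)^{1/2}}$ for $x\le 0$, and trivially $\to 1$ for $x>0$ since then $u_n(x)>1\ge$ all $X_i$ eventually; this is precisely $H(x)$.
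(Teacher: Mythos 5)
Your overall strategy coincides with the paper's: derive the Weibull law with exponent $1/2$ from the square-root tail $1-G_a(1-s)\asymp\sqrt{s}$ (via the structure of $\rho_a$ near the critical value and \cite[Theorem~1.6.2, Corollary~1.6.3]{LLR83}) and then invoke Leadbetter's theorem after checking $D(u_n)$ and $D'(u_n)$. However, your step (ii) contains a genuine gap. The events entering $D(u_n)$ are of the form $\{X_{i_1}\le u_n,\dots,X_{i_p}\le u_n\}=\bigcap_{l} f_a^{-i_l}\bigl((-\infty,u_n]\bigr)$, and their indicators are indicators of unions of a number of intervals that grows with the block length (essentially the number of monotonicity branches of $f_a^{i_p}$); their total variation is therefore not uniformly bounded, so they cannot be fed into the decay of correlations estimate \eqref{eq:decay-correlations}, whose constant depends on the BV norms of both observables. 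This is exactly why the paper does not argue as you propose: it deduces $D(u_n)$ from Keller's exponential weak-Bernoulli mixing of the generating partition $\QQ$, refined by one point so that $\{X_0>u_n\}\in\F_0$ (Remark~\ref{rem:weak-bernoulli-Dun}), since the weak-Bernoulli coefficient controls sums over all cylinders of the joins and hence all joint events in $D(u_n)$. (The alternative is to replace $D(u_n)$ by the weaker condition of \cite{FF2}, with a single exceedance on one side, which does follow from decay of correlations; but the ``routine'' derivation you sketch does not work for $D(u_n)$ as stated.)

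Concerning $D'(u_n)$, your plan has the right architecture (short lags via Benedicks--Carleson estimates, long lags via decay of correlations; the paper splits at $T\asymp\log n$ chosen so that the correlation error is below $n^{-3}$), but the decisive quantitative ingredient is missing: a bound on the Lebesgue measure of the set of points of $I_{\gamma_0}$ whose orbit has a free deep return of depth $\gamma_1\ge\Theta$ at time $j$. The paper obtains this in Proposition~\ref{prop:depth-probability-1} and Corollary~\ref{cor:prob-deep-returns} (an estimate of order $\binom{v}{s}\exp\{-(1-6\beta)(\gamma_0+\gamma_1)\}$) by a combinatorial analysis of essential return histories with bounded distortion, and it also needs Lemma~\ref{lem:time-between-exceedances} and Lemma~\ref{lem:no-bound-retruns} to exclude deep returns, including bound returns, throughout the whole window $\{1,\dots,T\}$; note that the split is not at the end of the bound period, as you suggest, since the bound period of a $\Theta$-deep return already covers all lags up to $T$. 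A further point your sketch overlooks is that $\rho_a$ is unbounded, with square-root singularities along the critical orbit, so a bound on $|U_\Theta\cap f_a^{-j}U_\Theta|$ does not immediately bound $\mu_a\bigl(U_\Theta\cap f_a^{-j}U_\Theta\bigr)$; the paper uses the decomposition $\rho_a=\rho_1^a+\rho_2^a$ together with \eqref{eq:BA} to show that on $U_\Theta$ the relevant part of the density is at most of order $\e^{\beta\Theta}$ up to an exponentially small error, the extra factor being absorbed into the exponent $1-7\beta$. Without these elements your text is a correct outline of the paper's route rather than a proof.
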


Haiman \cite{Ha03} has obtained a similar asymptotic
result for the natural stochastic process associated with the tent
map. The arguments used here for
quadratic maps would allow us to obtain a different proof of
Haiman's Theorem.

Also, a study concerning extremes for dynamical systems, essentially
focusing the finite sample behavior of maxima, has already been done
by Balakrishnan, Nicolis and Nicolis in \cite{BNN95}.

More recently, Collet \cite{Co01} has studied the asymptotic
distribution of the partial maximum of the stochastic process $Y_1,
Y_2,\ldots$ given by $Y_n=-\log|\xi-f^n|$, where $\xi$ is a
reference point in the phase space and $f$ a $C^2$ non-uniformly
hyperbolic interval map. He obtained that the Gumbel's limiting law
applies for Lebesgue-a.e. point $\xi$ chosen in the phase space.
However, the critical point and its orbit are not included in this
full Lebesgue measure set of points.

\section{Motivation and Strategy}\label{sec:strategy}

The study of the limit behavior for maxima of a stationary process
can be reduced, under adequate conditions on the dependence
structure, to the Classical Extreme Value Theory for sequences of
i.i.d. random variables. Hence, to the stationary process $X_0,
X_1,\ldots$ we associate an independent sequence of r.v. denoted by
$Z_0, Z_1,\ldots$ with common d.f. given by $G_a(x)=P\left\{X_0\leq
x\right\}=\mu_a\left\{X_0\leq x \right\}$. We also set for each
$n\in\N$
\begin{equation}
\label{eq:def-maximum-iid}
\hat{M}_n=\max\left\{Z_0,\ldots,Z_{n-1}\right\}.
\end{equation}
Let us focus on the conditions that allow us to relate the
asymptotic distribution of $M_n$ with that of $\hat M_n$. Following
\cite{LLR83} we refer to these conditions as $D(u_n)$ and $D'(u_n)$,
where $u_n$ is a suitable sequence of thresholds converging to
$\max_{z\in[-1,1]}X_0(z)=1$, as $n$ goes to $\infty$, that will be
defined below. $D(u_n)$ imposes a certain type of distributional
mixing property. Essentially, it says that the dependence between
some special type of events fades away as they become more and more
apart in the time line. $D'(u_n)$ restricts the appearance of
clusters, that is, it makes the occurrence of consecutive
`exceedances' of the level $u_n$ an unlikely event.

Consider a sequence of stationary r.v. $Y_1,Y_2\ldots$ with common
d.f. $F$. We say that an \emph{exceedance} of the \emph{level} $u_n$
occurs at time $i$ if $Y_i>u_n$. The probability of such an
exceedance is $1-F(u_n)$ and so the mean value of the number of
exceedances occurring up to $n$ is $n(1-F(u_n))$. The sequences of
levels $u_n$ we consider are such that $n(1-F(u_n))\rightarrow\tau$
as $n\rightarrow\infty$, for some $\tau\geq0$, which means that, in
a time period of length $n$, the expected number of exceedances is
approximately $\tau$.

Observe that, in the case of our stationary process
$X_0,X_1,\ldots$, we have $X_j>u_n$ if and only if
$|X_{j-1}|<\sqrt{\frac{1-u_n}{a}}$. Since $u_n\to1$ as $n\to\infty$,
then, for large $n$, an exceedance of the level $u_n$ is always
preceded by a return to a tight vicinity of the critical point.
Hence, as one would expect, there is an intimate relation between
exceedances and deep returns to the vicinity of the critical point.

$D(u_n)$ is a type of mixing requirement specially adapted to
Extreme Value Theory. In this context, the events of interest are
those of the form $\{X_i\leq u\}$ and their intersections. Observe
that $\{M_n\leq u\}$ is just $\{X_0\leq u,\ldots,X_{n-1}\leq u\}$. A
natural mixing condition in this context is the following. Let
$G_{i_1,\ldots,i_n}(x_1,\ldots,x_n)$ denote the joint d.f. of
$X_{i_1},\ldots,X_{i_n}$ and set
$G_{i_1,\ldots,i_n}(u)=G_{i_1,\ldots,i_n}(u,\ldots,u)$.
\begin{condition}[$D(u_n)$]\label{cond:D}We say that $D(u_n)$ holds
for the sequence $X_0,X_1,X_2,\ldots$ if for any integers
$i_1<\ldots<i_p$ and $j_1<\ldots<j_k$ for which $j_1-i_p>m$, and any
large $n\in\N$,
\[
\left|G_{i_1,\ldots,i_p,j_1,\ldots,j_k}(u_n)-G_{i_1,\ldots,i_p}(u_n)
G_{j_1,\ldots,j_k}(u_n)\right|\leq \gamma(m),
\]
where $\gamma(m)\rightarrow0$ as $m\rightarrow\infty$.
\end{condition}

We remark that the actual definition of $D(u_n)$ appearing in
\cite[Section~3.2]{LLR83} is a weaker requirement. Following \cite{Co01}, we proposed, very recently, in \cite{FF2}, a reformulation of condition $D(u_n)$ that follows almost immediately
from the decay of correlations of the system and together with
$D'(un)$ still allows to relate the asymptotic distributions of
$M_n$ and $\hat M_n$.

\begin{condition}[$D'(u_n)$]\label{cond:D'un} We say that $D'(u_n)$
holds for the sequence $X_0,X_1,X_2,\ldots$ if
\begin{equation}
\label{eq:D'un} \lim_{k\rightarrow\infty}
\limsup_{n\rightarrow\infty}\,n\sum_{j=1}^{[n/k]}P\{X_0>u_n\text{
and }X_j>u_n\}=0.
\end{equation}
\end{condition}

The sequence $u_n$ is such that the average number of exceedances in
the time interval $\{0,\ldots,[n/k]\}$ is approximately $\tau/k$,
which goes to zero as $k\rightarrow\infty$. However, the exceedances
may have a tendency to be concentrated in the time period following
the first exceedance at time $0$. Condition \ref{eq:D'un} prevents
this from happening, i.e., forbids the concentration of exceedances
by bounding the probability of more than one exceedance in the time
interval $\{0,\ldots,[n/k]\}$. This guarantees that the exceedances
should appear scattered through the time period $\{0,\ldots,n-1\}$.

The special relevance of both these conditions is connected to
Theorem~3.5.2 of \cite{LLR83}: let $a_n$ and $b_n$ be sequences such
that $P\{a_n(\hat M_n-b_n)\leq x\}\rightarrow H(x)$ for some
non-degenerate d.f. $H$; if $D(u_n)$, $D'(u_n)$ hold for the
stationary sequence $X_1,X_2,\ldots$, when $u_n=x/a_n+b_n$ for each
$x$, then $P\{a_n( M_n-b_n)\leq x\}\rightarrow H(x)$. This means
that if we are able to show that conditions $D(u_n)$ and $D'(u_n)$
are valid for the stationary process $X_0,X_1,\ldots$, then $M_n$
and $\hat M_n$ share the same asymptotic distribution with the same
normalizing sequences. Consequently, our strategy to prove
Theorem~\ref{th:A-tipo-1} is the following:
\begin{itemize}

\item Compute the limiting distribution of $\hat M_n$ and the
associated normalizing sequences $a_n$ and $b_n$.

\item Show that conditions $D(u_n)$ and $D'(u_n)$ are
valid for the stochastic process \linebreak $X_0,X_1,X_2,\ldots$
defined in \eqref{eq:def-process}.

\end{itemize}
The rest of the paper is dedicated to the proof of these assertions
and is structured as follows. In Section~\ref{sec:BC-techniques}, we
describe the properties of the dynamical systems $f_a$, with $a\in
\B$, and the Benedicks-Carleson techniques. The validity of
condition $D(u_n)$ is a consequence of the very good mixing
properties of the systems considered here. Actually, it follows from
the fact that these systems possess a weak-Bernoulli generator (see
Section~\ref{subsec:weak-Bernoulli} and
Remark~\ref{rem:weak-bernoulli-Dun}). Then, in
Section~\ref{sec:domain-attraction-iid}, we study the asymptotic
behavior of the maximum in the i.i.d. case, identify the limiting
distribution of $\hat M_n$ and the respective normalizing sequences
$a_n$ and $b_n$. Hence, we are left with the burden of proving
$D'(u_n)$. In Section~\ref{sec:Depth Probability}, we use the
geometric properties of the systems to show
Proposition~\ref{prop:depth-probability-1} that paves the way for
the proof of $D'(u_n)$ which is finally established in
Section~\ref{sec:D'un}. In Section~\ref{sec:simulation}, we present
a small simulation study in order to compare the finite sample
behavior of the normalized $M_n$ with the asymptotic one.

\section{Properties of the Benedicks-Carleson
parameters}\label{sec:BC-techniques}

The Benedicks-Carleson Theorem (see \cite{BC85} or Section 2 of
\cite{BC91}) states that there exists a positive Lebesgue measure
set of parameters, $\B$, verifying
\begin{align}
&\mbox{there is $c>0$ ($c\approx \log2$)
such that $|Df_a^n(f_a(0))|\geq \e^{cn}$ for all $n\geq0$};
\tag{EG}\label{eq:EG}\\
&\text{there is a small $\alpha>0$ such that $|f_a^n(0)|\geq\e^
{-\alpha \sqrt n}$ for all $n\geq 1$}\tag{BA}\label{eq:BA}.
\end{align}

Before we describe the Benedicks-Carleson strategy let us have an
overview of its key ingredients. The \emph{critical region} is the
interval $(-\delta,\delta)$, where $\delta=\e^{-\Delta}>0$ is chosen
small but much larger than $2-a$. This region is partitioned into
the intervals
\[
(-\delta,\delta)=\bigcup_{m\geq\Delta} I_m,
\]
where $I_m=(\e^{-(m+1)},\e^{-m}]$ for $m>0$ and $I_m=-I_{-m}$ for
$m<0$; then each $I_m$ is further subdivided into $m^2$ intervals
$\{I_{m,j}\}$ of equal length inducing the partition $\P_0$ of
$[-1,1]$ into
\begin{equation}
\label{eq:partition} [-1,-\delta)\cup
\bigcup_{m,j}I_{m,j}\cup(\delta,1].
\end{equation}
Given $J\in\P$, let $nJ$ denote the interval $n$ times the length of
$J$ centered at $J$ and define $U_m:=(-\e^{-m},\e^{-m})$, for every
$m\in\N$. In order to study the growth of $Df_a^n(x)$, for
$x\in[-1,1]$ and $a\in\B$, the orbit is splitted in free periods and
bound periods. During the former we are certain that the orbit never
visits the critical region. The latter begin when the orbit returns
to the critical region and initiates a bound to the critical point,
accompanying its early iterates. The behavior of the derivative
during these periods is detailed in Subsections
\ref{subsec:free-period-estimates} and
\ref{subsec:bound-period-estimates}.

\subsection{Expansion outside the critical region}
\label{subsec:free-period-estimates} There is $c_0>0$ and $M_0\in\N$
such that
\begin{enumerate}

\item \label{item:free-period-M0-1d} If
$x,\ldots,f_a^{k-1}(x)\notin(-\delta,\delta)$ and $k\geq M_0$, then
$|Df_a^k(x)|\geq\e^{c_0k}$;

\item \label{item:free-period-return-1d} If
$x,\ldots,f_a^{k-1}(x)\notin(-\delta,\delta)$ and
$f_a^k(x)\in(-\delta,\delta)$, then $|Df_a^k(x)|\geq\e^{c_0k}$;

\item \label{item:free-period-1d} If
$x,\ldots,f_a^{k-1}(x)\notin(-\delta,\delta)$, then
$|Df_a^k(x)|\geq\delta\e^{c_0k}$.

\end{enumerate}

If we were capable of keeping the orbit of $x$ away from the
critical region then it would be in \emph{free period} forever and
the estimates above would apply. However, it is inevitable that
almost every $x\in[-1,1]$ makes a \emph{return} to the critical
region. We say that $n\in\N$ is a \emph{return time} of the orbit of
$x$ if $f_a^n(x)\in(-\delta,\delta)$. Every free period of $x$ ends
with a \emph{free return} to the critical region. We say that the
return had a \emph{depth} of $m\in\N$ if  $f_a^n(x)\in I_{\pm m}$,
which is equivalent to saying that $m=\left[-\log
|f_a^n(x)|\right]$. Once in the critical region, the orbit of $x$
initiates a binding with the critical point.

\subsection{Bound period definition and properties}
\label{subsec:bound-period-estimates} Let $\beta=14\alpha$. For
$x\in(-\delta,\delta)$ define $p(x)$ to be the largest integer $p$
such that
\begin{equation}
\label{eq:def-bound-period} |f_a^k(x)-f_a^k(0)|<\e^{-\beta k},\qquad
\forall k<p.
\end{equation}Then

\begin{enumerate}

\item \label{item:bound-period-bounds-1d}$\frac{1}{2}|m|\leq p(x)\leq3|m|$,
for each $ x\in I_m$;

\item \label{item:bound-period-derivative-1d} $|Df_a^p(x)|\geq
\e^{c'p}$, where $c'=\frac{1-4\beta}{3}>0$.

\end{enumerate}
The orbit of $x$ is said to be bound to the critical point during
the period $0\leq k<p$. We may assume that $p$ is constant on each
$I_{m,j}$. Note that during the bound period the orbit of $x$ may
return to the critical region. These instants are called \emph{bound
return times}.

Roughly speaking, the idea behind the proof of Benedicks-Carleson
Theorem is that while the orbit of the critical point is outside the
critical region we have expansion (see Subsection
\ref{subsec:free-period-estimates}); when it returns we have a
serious setback in the expansion but then, by continuity, the orbit
repeats its early history regaining expansion on account of
\eqref{eq:EG}. To arrange for the exponential growth of the
derivative along the critical orbit \eqref{eq:EG} one has to
guarantee that the losses at the returns are not too drastic; hence,
by parameter elimination, the basic assumption condition
\eqref{eq:BA} is imposed. The argument is mounted in a very
intricate induction scheme that guarantees both the conditions for
the parameters that survive the exclusions. The condition
\eqref{eq:EG} is usually known as the Collet-Eckmann condition and
it was introduced in \cite{CE83}.

\subsection{Bookkeeping, essential and inessential returns}
A sequence of partitions $\P_0\prec\P_1\prec\ldots$ is built so that
points in the same element of the partition $\P_n$ have the same
history up to time $n$. For a detailed description of the
construction of this sequence of partitions in the phase space
setting we refer to \cite[Section~4]{Fr05}. Here, we highlight some
of the main aspects of its construction.

For Lebesgue almost every $x\in I$, $\{x\}=\cap_{n\geq
0}\omega_n(x)$, where $\omega_n(x)$ is the element of
$\mathcal{P}_n$ containing $x$. For such $x$ there is a sequence
$t_1,t_2,\ldots$ corresponding to the instants when the orbit of $x$
experiences a free \emph{essential return situation}, which means
$I_{m,k}\subset f_a^{t_i}(\omega_{t_i-1}(x))$ for some $|m|\geq
\Delta$ and $1\leq k\leq m^2$. We have that
$\omega_n(x)=\omega_{t_{i-1}}(x)$, for every $t_{i-1}\leq n<t_i$ and
$f_a^{t_i}(\omega_{t_i}(x))=\omega_0(f^{t_i}(x))$, except for the
points at the two ends of $f_a^{t_i}(\omega_{t_{i-1}}(x))$ for which
it may occur an adjoining to the neighboring interval. If $t_i$ is
an essential return situation for $x$, then it is either an
\emph{essential return time} for $x$, which means that there exists
$m\geq\Delta$ and $1\leq k\leq m^2$ such that $I_{m,k}\subset
f_a^{t_i}(\omega_{t_i}(x))\subset 3 I_{m,k}$; or an \emph{escaping
time} for $x$, which is to say that $I_{(\Delta-1),1}\subset
f_a^{t_i}(\omega_{t_i}(x))\subset (\delta,1]$ or
$I_{-(\Delta-1),1}\subset f_a^{t_i}(\omega_{t_i}(x))\subset
[-1,-\delta)$, where $I_{\pm(\Delta-1),1}$ is the subinterval of
$I_{\pm(\Delta-1)}$ closest to $0$.

We remark that every point in $\omega\in\P_n$ has the same history
up to $n$, in the sense that they have the same free periods, return
to the critical region simultaneously, with the same depth and their
bound periods expire at the same time.

We say that $v$ is a free return time for $x$ of \emph{inessential}
type if $f_a^{v}(\omega_{v}(x))\subset 3I_{m,k}$, for some $|m|\geq
\Delta$ and $1\leq k\leq m^2$, but $f_a^{v}(\omega_{v}(x))$ is not
large enough to contain an interval $I_{m,k}$ for some $|m|\geq
\Delta$ and $1\leq k\leq m^2$.

\subsection{Distortion of the derivative}
\label{subsec:bounded-distortion} The sequence of partitions
described above is designed so that we have bounded distortion in
each element of the partition $\P_{n-1}$ up to time $n$. To be more
precise, consider $\omega\in\P_{n-1}$. There exists a constant $C$
independent of $\omega$, $n$ and the parameter $a$ such that for
every $x,y\in\omega$,
\begin{equation}
\label{eq:bounded-distortion}
 \frac{|Df_a^n(x)|}{|Df_a^n(y)|}\leq C.
\end{equation}
See \cite[Lemma~4.2]{Fr05} for a proof.

\subsection{Growth of returning and escaping components}
\label{subsec:growth-components} Let $t$ be an essential return time
for $\omega\in\P_{t}$, with $I_{m,k}\subset f_a^{t}(\omega)\subset 3
I_{m,k}$ for some $m\geq\Delta$ and $1\leq k\leq m^2$. If $n$ is the
next free return situation for $\omega$ (either essential or
inessential) then
\begin{equation}
\label{eq:return-after-essential} \left|f_a^{n}(\omega)\right|\geq
\e^{c_0q}\e^{-5\beta|m|},
\end{equation}
where $q=n-(t+p)$. See \cite[Lemma~4.1]{Fr05}.

Suppose that $\omega\in\mathcal{P}_t$ is an escape component. Then,
in the next return situation for $\omega$, at time $t_1$, we have
that
\begin{equation}
\label{eq:return-after-escape} \left|f_a^{t_1}(\omega)\right|\geq
\e^{-\beta\Delta}.
\end{equation}
See \cite{MS93}, \cite[Lemma 4.2]{Fr06} or \cite[Lemma~5.1]{Mo92}
for a proof of a similar statement in the space of parameters.

\subsection{Existence of absolutely continuous invariant measures}
\label{subsection:existence-inv-measures} For every $a\in\B$, the
quadratic map $f_a$ has an invariant probability measure $\mu_a$
that is absolutely continuous with respect to Lebesgue measure on
$[-1,1]$. The existence of absolutely continuous invariant measures
(a.c.i.m) for a positive Lebesgue measure set of parameters was
first proved by Jakobson in \cite{Ja81} and others followed. See,
for example, \cite{CE83}, \cite{BC85}, \cite{No85}, \cite{Ry88},
\cite{BY92} and \cite{Yo92}.

The a.c.i.m. $\mu_a=\rho_a dx$ has the following properties:
\begin{enumerate}

\item \label{item:acim-uniqueness} $\mu_a$ is the only a.c.i.m. of
$f_a$;

\item \label{item:acim-exact} $(f_a,\mu_a)$ is exact;

\item \label{item:acim-density-1}$\rho_a=\rho_1^a+\rho_2^a$, where
$\rho_1^a$ has bounded variation and
$0\leq\rho_2^a(x)\leq\mbox{const}
\sum_{j=1}^\infty\frac{(1.9)^{-j}}{\sqrt{|x-f_a^j(0)|}}$;

\item \label{item:acim-density-2} The support of $\mu_a$ is
$[f_a^2(0),f_a(0)]$ and $\inf_{x\in[f_a^2(0),f_a(0)]}\rho_a(x)>0$.

\end{enumerate}
The proof of these statements can be found in \cite[Theorems 1 and
2]{Yo92}.

\subsection{Decay of correlations and Central Limit Theorem}
\label{subsec:decay-corr-CLT} The Benedicks-Carleson quadratic maps
have good statistical behavior. In fact, L. S. Young proved that
these maps have exponential decay of correlations and satisfy the
Central Limit Theorem (\cite[Theorems 3 and 4]{Yo92}). This was also
obtained by Keller and Nowicki in \cite{KN92}. To be more precise,
for every $a\in \B$, there exists $\varsigma\in(0,1)$ such that for
all $\varphi,\psi:[-1,1]\rightarrow\R$ with bounded variation, there
is $C=C(\varphi,\psi)$ such that
\begin{equation}
\label{eq:decay-correlations} \left| \int\varphi\cdot(\psi\circ
f_a^n)d\mu_a-\int\varphi d\mu_a\int\psi d\mu_a\right|\leq
C\varsigma^n,\quad\forall n\geq 0.
\end{equation}
Moreover, if $\int\varphi d\mu_a=0$ then for every $x\in\R$ we have
\begin{equation}
\label{eq:CLT}
\mu_a\left\{\frac1{\sqrt{n}}\sum_{i=0}^{n-1}\varphi\circ f_a^i\leq
x\right\}\xrightarrow[n\rightarrow\infty]{}\Phi(x/\sigma),
\end{equation}
where we are assuming that
$\sigma:=\lim_{n\rightarrow\infty}\frac1{\sqrt{n}}\left[\int\left(
\sum_{i=0}^{n-1}\varphi\circ f_a^i\right)^2d\mu_a\right]^{1/2}>0$
and $\Phi(\cdot)$ denotes the $N(0,1)$ d.f.

\subsection{Exponential weak-Bernoulli
mixing}\label{subsec:weak-Bernoulli} Keller \cite{Ke94} has obtained
a result even sharper than \eqref{eq:decay-correlations}. Consider
the partition of $[-1,1]$ given by
$\QQ=\left\{[-1,0),[0,1]\right\}$. Also, for integers $k<l$, denote
by $\QQ_k^l$ the join of partitions $\bigvee_{i=k}^lf_a^{-i}\QQ$ and
by $\F_k^l$ the $\sigma$-algebra generated by $\QQ_k^l$. According
to \cite{Ke94} the partition $\QQ$
 is a weak-Bernoulli generator for
every $f_a$ with $a\in\B$. This means that the $\sigma$-algebra
$\F_0^\infty$ coincides, up to sets of Lebesgue measure $0$, with
the Borel $\sigma$-algebra of sets in $[-1,1]$ and that
\[
\beta_m(f_a,\QQ,\mu_a)\rightarrow0,\; \mbox{ as }m\rightarrow\infty,
\]
where
\begin{equation*}
\begin{split}
\beta_m(f_a,\QQ,\mu_a):&=2\sup_{k>0}\int\sup\left\{
\left|\mu_a(A|\F_0^k)-\mu_a(A)\right|:\,A\in\F_{k+m}^\infty\right\}d\mu_a\\
&=\sup_{k\geq1,\,L\geq1}\sum_{A\in\QQ_0^k,\,B\in\QQ_{k+m}^{k+m+L}}
\left|\mu_a(A\cap B)-\mu_a(A)\mu_a(B)\right|.
\end{split}
\end{equation*}
In fact, \cite[Theorem~1]{Ke94} states that there are constants
$C>0$ and $0<r<1$ such that
\begin{equation}
\label{eq:weak-bernoulli-exponential} \beta_m(f_a,\QQ,\mu_a)\leq
Cr^m
\end{equation}
 for all $m\in\N$.

\begin{remark}
\label{rem:weak-bernoulli-Dun} We observe that if we refine the
partition $\QQ$ by adding one point so that $\{X_0>u\}\in\F_0$,
where $\F_0$ is the $\sigma$-algebra generated by $\QQ$, then
Keller's argument still holds with the same type of estimate as in
\eqref{eq:weak-bernoulli-exponential}. As a consequence, condition
$D(u_n)$ is true for every considered sequence $u_n$.

\end{remark}

\section{Domain of attraction of the associated i.i.d. process}
\label{sec:domain-attraction-iid} We recall that to every stationary
stochastic process $X_0,X_1,X_2,\ldots$ defined in
\eqref{eq:def-process} we associated an i.i.d. sequence of r.v.
$Z_0,Z_1,Z_2,\ldots$ with common d.f. given by $G_a(x)=P\{X_0\leq
x\}=\mu_a\{(-\infty,x]\cap[-1,1]\}$ (see
Section~\ref{sec:strategy}). In this Section we will determine the
domain of attraction corresponding to the d.f. $G_a$, i.e., we will
compute the limiting distribution of $\hat M_n$, defined in
\eqref{eq:def-maximum-iid}, when properly normalized. For that
purpose one must look at the tail behavior of $1-G_a(x)$ as $x$ gets
closer to $\sup_{y\in\R}\left\{G_a(y)<1\right\}=1$. According to
Section~\ref{subsection:existence-inv-measures}~
\eqref{item:acim-density-1}, if $z$ is close to $1$ we may write
$\mu_a\left\{(z,1]\right\}\asymp\sqrt{1-z})$, in the sense that
$\frac{\mu_a\left\{(z,1]\right\}}{\sqrt{1-z}}\rightarrow c$ for some
$c>0$, as $z\rightarrow1$. Hence, for $s>0$ sufficiently close to
$0$ we have:
\begin{equation}
\label{eq:tail-g1} 1-G_a\left(1-s\right)\asymp\sqrt{1-(1-s)}
\asymp\sqrt s,
\end{equation}
which means that
$\lim_{s\rightarrow0^+}\frac{1-G_a\left(1-s\right)}{\sqrt s}>0$. At
this point, we apply \cite[Theorem~1.6.2]{LLR83} to obtain that
$G_a$, in this case, belongs to the domain of attraction of type III
(Weibull) with parameter $1/2$, since for every $x>0$
\[
\lim_{h\rightarrow0^+}\frac{1-G_a\left(1-xh\right)}
{1-G_a\left(1-h\right)}=\lim_{h\rightarrow0^+}\frac{\sqrt{xh}}
{\sqrt{h}}=x^{1/2}.
\]
Moreover, according to \cite[Corollary~1.6.3]{LLR83}, if we consider
the sequences defined for each $n\in\N$ by $b_n=1$ and $a_n=
\left(1-G_a^{-1}(1-1/n)\right)^{-1}$, where
$G_a^{-1}(y)=\inf\{x:\,G_a(x)\geq y\}$, then
\[
P\left\{a_n(\hat M_n-b_n)\leq x\right\}\rightarrow
H(x)=\begin{cases}
\e^{-(-x)^{1/2}}&,x\leq0\\
1&,x>0
\end{cases},
\]
as $n\rightarrow\infty$.

\section{Probability of an essential return reaching a certain
depth} \label{sec:Depth Probability}

In the study of extremes, one is mostly interested in the
probability of occurrence of exceedances of the level $u_n$. As we
have already mentioned in Section~\ref{sec:strategy}, these events
are related with the occurrence of deep returns. Thus, in this
section we do some preparatory work by estimating the probability of
the returns hitting a given depth.

For each $x\in I$, let $v_n(x)$ denote the number of essential
return situations of $x$ between $1$ and $n$, $s_n(x)$ be the number
of those which are actual essential return times and
$\mathfrak{S}_n$ the number of the latter that correspond to
\emph{deep essential returns} of the orbit of $x$, i.e, with return
depths above a threshold $\Theta\geq \Delta$. Observe that
$v_n(x)-s_n(x)$ is the exact number of escaping situations of the
orbit of $x$, up to $n$.

Given the integers $0\leq s\leq \frac{2n}{\Theta}$, $s\leq v\leq n$
and the $s$ integers $\gamma_1,\ldots,\gamma_{s}$, each greater than
or equal to $\Theta$, we define the event:

\[
A_{\gamma_1,\ldots,\gamma_s}^{v,s}(n)=\left\{x\in I
\;:\;v_n(x)=v,\,\mathfrak{S}_n(x)=s\, \mbox{\begin{tabular}[t]{c}
and the depth of the i-th deep essen- \\ tial return is $\gamma_i$
$\forall i\in \{1,\ldots,s\}$\end{tabular}} \right\}.
\]
\begin{remark}
\label{rem:upper-bound-sn} Observe that the upper bound
$\frac{2n}{\Theta}$ for the number of deep essential returns up to
time $n$ derives from the fact that each deep essential return
originates a bound period of length at least $\tfrac{1}{2}\Theta$
(see Section \ref{subsec:bound-period-estimates}). Since during the
bound periods there cannot be any essential return, the number of
deep essential returns occurring in a period of length $n$ is at
most $\frac{n}{\frac{1}{2}\Theta}$.
\end{remark}

\begin{proposition}
\label{prop:depth-probability-1}Given the integers $0\leq s\leq
\frac{2n}{\Theta}$ and $s\leq v\leq n$, consider $s$ integers
$\gamma_1,\ldots,\gamma_{s}$, each greater than or equal to
$\Theta$. If $\Theta$ is large enough, then
\[
\lambda\left(A_{\gamma_1,\ldots,\gamma_s}^{v,s}(n)\right)\leq
\binom{v}{s}
\mbox{Exp}\left\{-(1-6\beta)\sum_{i=1}^s\gamma_i\right\}.
\]
\end{proposition}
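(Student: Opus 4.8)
The plan is to estimate the Lebesgue measure of $A_{\gamma_1,\dots,\gamma_s}^{v,s}(n)$ by summing, over all admissible combinatorial data, the measures of the partition elements $\omega\in\P_n$ that are compatible with the prescribed itinerary. First I would recall from Subsection~\ref{subsec:growth-components} that, after an essential return at depth $m$, the returning component regrows: by \eqref{eq:return-after-essential}, if $t$ is an essential return time with $f_a^t(\omega)\supset I_{m,k}$ and $n'$ is the next free return situation, then $|f_a^{n'}(\omega)|\geq \e^{c_0 q}\e^{-5\beta|m|}$ with $q=n'-(t+p)$. Combined with the bounded-distortion estimate \eqref{eq:bounded-distortion} and the facts about how $f_a^{t_i}(\omega_{t_i}(x))$ fills up $\omega_0(f^{t_i}(x))$, this controls the proportion of $\omega$ (or rather of its image at the return time) that is allowed to fall into a given depth interval $I_{\gamma_i}$. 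The key point is that, at an essential return of depth $\gamma$, the fraction of the current interval that goes into depth $\gamma$ is at most of order $\e^{-\gamma}$ (the length of $I_{\pm\gamma}$) divided by the length of the interval at the previous return — and the regrowth estimate guarantees that this denominator is not too small, so that the loss incurred at a return of depth $\gamma$ contributes a factor comparable to $\e^{-\gamma}$, up to a correction $\e^{5\beta\gamma}$ from \eqref{eq:return-after-essential} and the distortion constant.

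The argument I would then set up is an induction (or a telescoping product) over the sequence of deep essential returns $1\le\gamma_1,\dots,\gamma_s$. Enumerating the returns, one writes the measure of the event as a product over the $s$ deep returns of the conditional probability of landing at depth $\gamma_i$ given the past, and bounds each such factor by $C\e^{-\gamma_i}\e^{5\beta\gamma_i}$, where $C$ absorbs the distortion constant from \eqref{eq:bounded-distortion}; the escaping situations and the shallow returns only help (they regrow the component, cf.\ \eqref{eq:return-after-escape}), so they can be discarded from the upper bound. Multiplying the $s$ factors yields $\mbox{Exp}\{-\sum_i\gamma_i + 5\beta\sum_i\gamma_i + s\log C\}$, and since each $\gamma_i\ge\Theta$, choosing $\Theta$ large enough makes $s\log C\le \beta\sum_i\gamma_i$ (using $s\le\sum_i\gamma_i/\Theta$), giving the exponent $-(1-6\beta)\sum_i\gamma_i$. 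Finally, the binomial factor $\binom{v}{s}$ accounts for the number of ways of choosing which $s$ of the $v$ essential return situations are the deep ones: for a fixed choice of positions, the above product bound applies, and summing over the $\binom{v}{s}$ choices of positions produces the stated factor. One must check that the positions of the bound periods and free segments, which vary with the choice, do not spoil the estimate — but since those segments only contribute expansion (free-period estimates of Subsection~\ref{subsec:free-period-estimates}) and since bounded distortion holds on each $\P_{n-1}$ element regardless, the per-return factor $C\e^{-(1-5\beta)\gamma_i}$ is uniform.

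The main obstacle I expect is making the telescoping rigorous: one has to track precisely what ``the interval at the previous return'' means when there are intervening inessential returns and escaping times, and confirm that the regrowth estimate \eqref{eq:return-after-essential} (which is stated for the interval between an essential return and the \emph{next} free return situation) chains correctly across many returns without the accumulated $\e^{-5\beta|m|}$ losses piling up faster than the $\e^{-\gamma_i}$ gains. The cleanest way is probably to process the itinerary return-by-return: at the $j$-th deep essential return, the hosting component $\omega_{t_j}(x)$ has image containing some $I_{m,k}\subset f_a^{t_j}(\omega_{t_j}(x))\subset 3I_{m,k}$ with $m=\gamma_j$, so $|f_a^{t_j}(\omega_{t_j}(x))|\asymp\e^{-\gamma_j}$, while the parent component at the previous essential return had image of length at least $\e^{-5\beta\gamma_{j-1}}$ (times an $\e^{c_0 q}\ge 1$ regrowth factor) by \eqref{eq:return-after-essential}; bounded distortion \eqref{eq:bounded-distortion} then transfers this ratio back to Lebesgue measure at time $0$ with a uniform constant, and the product telescopes. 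A secondary, purely bookkeeping point is that the shallow essential returns and escaping situations must be shown to contribute a factor $\le 1$ to the bound, which again follows from \eqref{eq:return-after-escape} and the fact that their depth (below $\Theta$) is simply not recorded in the event, so the worst case is that they occur with probability $1$.
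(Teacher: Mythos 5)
Your proposal follows essentially the same route as the paper's proof: condition return-by-return along the itinerary, bound the fraction falling into depth $\gamma_i$ at each deep essential return by (bounded distortion) $\times$ $\e^{-\gamma_i}$ divided by the regrowth lower bound $\e^{-5\beta\eta}$ from \eqref{eq:return-after-essential} (or \eqref{eq:return-after-escape}), use the trivial factor $1$ at shallow returns and escapes, telescope, count the placements of the deep returns with $\binom{v}{s}$, and absorb the constants $C^s$ into an extra $\e^{-\beta\sum\gamma_i}$ via $s\Theta\le\sum_i\gamma_i$ with $\Theta$ large. The only detail the paper handles that you pass over is the boundary contribution of the initial element $\omega_0\in\P_0$ (the $\e^{5\beta\gamma_0}$ factor, summed over $\omega_0$ using $\Delta$ large), which is minor bookkeeping.
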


\begin{proof}
Fix $n\in\N$ and take $\omega_0\in \mathcal{P}_0$. Note that the
functions $v_n$, $s_n$ and $\mathfrak{S}_n$ are constant on each
$\omega\in \mathcal{P}_n$. Let $\omega\in \omega_0\cap
\mathcal{P}_n$ be such that $v_n(\omega)=v$. Then, there is a
sequence $1\leq t_1\leq\ldots\leq t_v\leq n$ of essential return
situations. Let $\omega_i$ denote the element of the partition
$\mathcal{P}_{t_i}$ that contains $\omega$. We have
$\omega_0\supset\omega_1\supset\ldots\supset\omega_v=\omega$. For
each $j\in\{0,\ldots,v\}$ we define the set:
\[
Q_j=\bigcup_{\omega\in\mathcal P_n\cap\omega_0}\omega_j,
\]
and its partition
\[
\mathcal{Q}_j=\{\omega_j:\,\omega\in\mathcal P_n\cap\omega_0\}.
\]
Let $\omega\in\mathcal P_n$ be such that $\mathfrak{S}_n(\omega)=s$.
Then, we may consider $1\leq r_1\leq\ldots\leq r_s\leq v$ with $r_i$
indicating that the $i$-th deep essential return occurs in the
$r_i$-th essential return situation. Now, set $V(0)=Q_0=\omega_0$.
Fix $s$ integers $1\leq r_1\leq\ldots\leq r_s\leq v$. Next, for each
$j\leq v$ we define recursively the sets $V(j)$. Although the set
$V(v)$ will depend on the fixed integers $1\leq r_1\leq\ldots\leq
r_s\leq v$, we do not indicate this so that the notation is not
overloaded.
 Suppose that
$V(j-1)$ is already defined and $r_{i-1}<j<r_i$. Then, we set
\[
V(j)=\bigcup_ {\omega\in\mathcal Q_j}\omega\cap
f_a^{-t_j}(I-U_\Theta)\cap V(j-1).
\]
If $j=r_i$ then we define
\[
V(j)=\bigcup_ {\omega\in\mathcal Q_{j}}\omega\cap
f_a^{-t_{j}}(I_{\gamma_i}\cup I_{-\gamma_i})\cap V(j-1).
\]
 Observe that for every
 $j\in\{1,\ldots,v\}$ we have
$\frac{|V(j)|}{|V(j-1)|}\leq 1.$ Therefore, we concentrate in
finding a better estimate for $\frac{|V(r_i)|}{|V(r_i-1)|}$.
Consider that $\omega_{r_i}\in\mathcal Q_{r_i}\cap V(r_i)$ and let
$\omega_{r_i-1}\in\mathcal Q_{r_i-1}\cap V(r_i-1)$ contain
 $\omega_{r_i}$.
We have to consider two situations depending on whether $t_{r_i-1}$
is an escaping situation or an essential return.

Let us suppose first that $t_{r_i-1}$ was an essential return with
return depth $\eta$. Then,
\begin{align*}
\frac{|\omega_{r_i}|}{|\omega_{r_i-1}|}&\leq
\frac{|\omega_{r_i}|}{|\widehat{\omega}_{r_i-1}|},\;\text{ where
$\widehat{\omega}_{r_i-1}=
\omega_{r_i-1}\cap f_a^{-t_{r_i}}(U_1)$} \\
&\leq C\frac{\left|f_a^{t_{r_i}}(\omega_{r_i})\right|}
{\left|f_a^{t_{r_i}}(\widehat{\omega}_{r_i-1})\right|},\; \text{by
\eqref{eq:bounded-distortion}}
\\
&\leq C\frac{2\e^{-\gamma_i}}{\e^{-5\beta\eta}},\;\text{by
\eqref{eq:return-after-essential}}.\\
\end{align*}
Note that when $r_{i-1}=r_i-1$ then $\eta=\gamma_{i-1}$. If, on the
other hand, $r_{i-1}<r_i-1$ then $t_{r_i-1}$ is an essential return
with depth $\eta<\Theta\leq \gamma_{i-1}$. Thus, in both situations,
 we have
\[
\frac{|\omega_{r_i}|}{|\omega_{r_i-1}|}\leq
2C\frac{\e^{-\gamma_i}}{\e^{-5\beta\gamma_{i-1}}}.
\]
When $t_{r_i-1}$ is an escape situation, instead of using
\eqref{eq:return-after-essential}, we can use
\eqref{eq:return-after-escape} and obtain
\[
\frac{|\omega_{r_i}|}{|\omega_{r_i-1}|}\leq
2C\frac{\e^{-\gamma_i}}{\e^{-\beta\Delta}}\leq
2C\frac{\e^{-\gamma_i}}{\e^{-5\beta\gamma_{i-1}}}.
\]
Observe also that if $\widehat{\omega}_{r_i-1} \neq\omega_{r_i-1}$
then, because we are assuming that $\omega_{r_i}\neq\emptyset$, we
have $\left|f_a^{t_{r_i}} (\widehat{\omega}_{r_i-1})\right|\geq
\e^{-1}-\e^{-\Theta}\geq \e^{-5\beta\gamma_{i-1}}$, for large
$\Theta$.

At this point we may write
\begin{align*}
|V(r_i)|&= \sum_{\omega_{r_i}\in \mathcal Q_{r_i}\cap V(r_i)}
\frac{|\omega_{r_i}|}{|\omega_{r_i-1}|}
|\omega_{r_i-1}| \\
                       &\leq 2C\e^{-\gamma_i}\e^{5\beta\gamma_{i-1}}
\sum_{\omega_{r_i}\in
\mathcal Q_{r_i}\cap V(r_i)}|\omega_{r_i-1}|\\
    &\leq
    2C\e^{-\gamma_i}\e^{5\beta\gamma_{i-1}}
    |V(r_i-1)|.
\end{align*}
This yields
\[
|V(v)|\leq
(2C)^s\mbox{Exp}\left\{-(1-5\beta)\sum_{i=1}^s\gamma_i\right\}
\e^{5\beta\gamma_0} |V(0)|,
\]
where $\gamma_0$ is given by the interval $\omega_0\in
\mathcal{P}_0$. If $\omega_0=I_{(\eta_0,k_0)}$ with $|\eta_0|\geq
\Delta$ and $1\leq k_0\leq \eta_0^2$, then $\gamma_0=|\eta_0|$. If
$\omega_0=(\delta,1]$ or $\omega_0=[-1,-\delta)$, then we can take
$\gamma_0=0$.

Now, we have to take into account the number of possibilities of
having the occurrence of the event $V(v)$ implying the occurrence of
the event $A_{\gamma_1,\ldots,\gamma_s}^{v,s}(n)$. The number of
possible configurations related with the different values that the
integers $r_1,\ldots r_s$ can take is $\binom{v}{s}$. Hence, it
follows that

\begin{align*}
\lambda\left(A_{\gamma_1,\ldots,\gamma_s}^{v,s}(n)\right)&\leq
(2C)^s\binom{v}{s}\mbox{Exp}\left\{-(1-5\beta)
            \sum_{i=1}^s\gamma_i\right\}
\sum_{\omega_o\in\mathcal{P}_0}\e^{5\beta|\gamma_0|}|\omega_0|\\
&\leq (2C)^s\binom{v}{s}\mbox{Exp}\left\{-(1-5\beta)
            \sum_{i=1}^s\gamma_i\right\}\left(2(1-\delta)+
            \sum_{|\eta_0|\geq\Delta}\e^{5\beta\eta_0}
            \e^{-|\eta_0|}\right)\\
&\leq 3(2C)^s \binom{v}{s}\mbox{Exp}\left\{-(1-5\beta)
            \sum_{i=1}^s\gamma_i\right\},\quad\text{for
$\Delta$
large enough}\\
&\leq \binom{v}{s}\mbox{Exp}\left\{-(1-6\beta)
            \sum_{i=1}^s\gamma_i\right\}.
\end{align*}
The last inequality results from the fact that $s\Theta\leq
\sum_{i=1}^s\gamma_i$ and the freedom to choose a sufficiently large
$\Theta$.
\end{proof}

Given the integers $0\leq s\leq \frac{2n}{\Theta}$, $s\leq v\leq n$
and the integers $\gamma_0,\gamma_{1}$, both greater than or equal
to $\Theta$, we consider the event:
\[
B_{\gamma_0,\gamma_{1}}^{v,s}(n)=\left\{x\in I_{\gamma_0}
\;:\;v_n(x)=v,\,\mathfrak{S}_n(x)=s\, \mbox{\begin{tabular}[t]{c}
and $n$ is a free
deep return \\
 with depth $\gamma_{1}$ \end{tabular}} \right\}.
\]

\begin{corollary}
\label{cor:prob-deep-returns} Consider the integers $0\leq s\leq
\frac{2n}{\Theta}$, $s\leq v\leq n$ and $\gamma_0,\gamma_{1}\geq
\Theta$. If $\Theta$ is large enough, then
\[
\lambda\left(B_{\gamma_0,\gamma_1}^{v,s}(n)\right)\leq \binom{v}{s}
\mbox{Exp}\left\{-(1-6\beta)(\gamma_0+\gamma_1)\right\}.
\]
\end{corollary}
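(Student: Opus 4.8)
The plan is to reduce Corollary~\ref{cor:prob-deep-returns} to
Proposition~\ref{prop:depth-probability-1} by viewing the terminal
requirement that ``$n$ is a free deep return with depth $\gamma_1$''
as one additional deep essential return appended to a configuration
already controlled by the proposition. Concretely, fix
$\omega_0=I_{\gamma_0,k_0}\in\mathcal P_0$ with $|\gamma_0|\geq\Theta$,
so that $\gamma_0$ plays exactly the role of the initial depth
$|\eta_0|$ in the last displayed computation of the proof of
Proposition~\ref{prop:depth-probability-1}; this is precisely where
the factor $\e^{5\beta\gamma_0}$ in the line
$|V(v)|\leq(2C)^s\mathrm{Exp}\{-(1-5\beta)\sum_{i=1}^s\gamma_i\}
\e^{5\beta\gamma_0}|V(0)|$ comes from. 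First I would run the same
recursive construction of the nested sets $V(0)\supset V(1)\supset
\cdots\supset V(v)$ inside $\omega_0$, obtaining the identical bound
$|V(v)|\leq(2C)^s\binom{v}{s}\mathrm{Exp}\{-(1-5\beta)
\sum_{i=1}^s\gamma_i\}\e^{5\beta\gamma_0}|\omega_0|$ after accounting
for the $\binom{v}{s}$ choices of the positions $r_1,\ldots,r_s$.

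Next I would perform one more step: conditioning $V(v)$ on the event
that the orbit at time $n$ has a free return of depth $\gamma_1$,
i.e. $f_a^n(x)\in I_{\gamma_1}\cup I_{-\gamma_1}$. Since $n$ is a free
return situation coming after the last essential/inessential return,
the growth estimates \eqref{eq:return-after-essential} (or
\eqref{eq:return-after-escape} if the last relevant situation is an
escape) together with bounded distortion \eqref{eq:bounded-distortion}
give, for each component $\omega\subset V(v)$, a bound of the form
$|\omega\cap f_a^{-n}(I_{\gamma_1}\cup I_{-\gamma_1})|\leq
2C\,\e^{-\gamma_1}\e^{5\beta\gamma}|\omega|$, where $\gamma$ is the
depth of that last return, exactly as in the estimate of
$|\omega_{r_i}|/|\omega_{r_i-1}|$ already carried out in the
proposition's proof; as there, $\e^{5\beta\gamma}$ is absorbed because
$\gamma\leq\gamma_s$ (or is compensated by $\e^{\beta\Delta}$ in the
escape case). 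Summing over components yields
$\lambda(B_{\gamma_0,\gamma_1}^{v,s}(n))\leq
2C\,\e^{-\gamma_1}\,\e^{5\beta\gamma_0}\,(2C)^s\binom{v}{s}
\mathrm{Exp}\{-(1-5\beta)\sum_{i=1}^s\gamma_i\}\,|\omega_0|$, and using
$|\omega_0|\leq\e^{-\gamma_0}$ turns the prefactor $\e^{5\beta\gamma_0}
|\omega_0|$ into $\e^{-(1-5\beta)\gamma_0}$. Absorbing the factor
$(2C)^{s+1}\leq\mathrm{Exp}\{\beta(\gamma_0+\gamma_1+
\sum_{i=1}^s\gamma_i)\}$ for $\Theta$ large (each $\gamma_i\geq\Theta$)
upgrades the rate from $1-5\beta$ to $1-6\beta$ and produces the claimed
bound $\binom{v}{s}\mathrm{Exp}\{-(1-6\beta)(\gamma_0+\gamma_1)\}$,
after further discarding the (favorable) factor
$\mathrm{Exp}\{-(1-6\beta)\sum_{i=1}^s\gamma_i\}\leq1$.

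The main obstacle, and the one point needing care rather than a routine
repetition of the proposition's argument, is the bookkeeping at the
\emph{last} return before time $n$: one must check that whichever
situation precedes the free return at $n$ — an essential return time,
an inessential free return, or an escape time — the growth estimate
\eqref{eq:return-after-essential} or \eqref{eq:return-after-escape}
applies with a loss no worse than $\e^{5\beta\gamma}$ with
$\gamma\leq\max\{\gamma_s,0\}<\Theta$ or $\gamma=\gamma_s$, so that
this loss is genuinely dominated by the gain $\e^{-\gamma_1}$ from the
depth-$\gamma_1$ return and does not interfere with the exponential
rate. This is exactly the dichotomy already handled in the
proposition's proof when estimating $|\omega_{r_i}|/|\omega_{r_i-1}|$,
so the argument transfers; I would simply state it once more in this
terminal setting and then invoke
Proposition~\ref{prop:depth-probability-1}'s conclusion for the first
$s$ deep returns, treating the depth at time $n$ as a final factor.
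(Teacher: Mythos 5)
Your overall route is the paper's: the authors prove the corollary in essentially one line, saying that the proposition's estimates persist even though the terminal return at time $n$ may be inessential, and invoking $\sum_{\gamma\geq\Theta}\e^{-(1-6\beta)\gamma}\leq 1$ for $\Theta$ large. The two new ingredients you identify are the right ones: restricting the start to $I_{\gamma_0}$ and combining the distortion loss $\e^{5\beta\gamma_0}$ with $|I_{\gamma_0}|\leq\e^{-\gamma_0}$ gives the factor $\e^{-(1-5\beta)\gamma_0}$, and one extra application of \eqref{eq:bounded-distortion} together with \eqref{eq:return-after-essential} or \eqref{eq:return-after-escape} at the (possibly inessential) free return at time $n$ gives the factor $\e^{-\gamma_1}$ up to a loss $\e^{5\beta\gamma}$, exactly as in the estimate of $|\omega_{r_i}|/|\omega_{r_i-1}|$.

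There is, however, a genuine gap in your last step. The event $B_{\gamma_0,\gamma_1}^{v,s}(n)$ does not prescribe the depths of the $s$ deep essential returns; it only asks that $\mathfrak{S}_n=s$. Your computation keeps those depths fixed throughout (you carry $\sum_{i=1}^{s}\gamma_i$ to the very end, and this notation moreover collides with the corollary's $\gamma_1$, which is the depth at time $n$), so what you actually bound is a refined sub-event of $B_{\gamma_0,\gamma_1}^{v,s}(n)$ in which the intermediate depths are specified; the inequality you write for $\lambda\bigl(B_{\gamma_0,\gamma_1}^{v,s}(n)\bigr)$ has free parameters on the right-hand side on which the left-hand side does not depend. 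Discarding the factor $\mbox{Exp}\{-(1-6\beta)\sum_{i=1}^{s}\gamma_i\}\leq 1$ and then claiming the corollary does not work: $B_{\gamma_0,\gamma_1}^{v,s}(n)$ is the union over all $s$-tuples of intermediate depths $\geq\Theta$, and summing the discarded bound over these infinitely many tuples diverges. The correct order is to keep the exponential factors, sum over the unspecified depths, and use that each of the $s$ resulting geometric sums is at most $1$ once $\Theta$ is large enough (this is precisely the remark $\sum_{\gamma\geq\Theta}\e^{-(1-6\beta)\gamma}\leq1$ in the paper's proof, and it is what makes the final bound depend only on $\gamma_0+\gamma_1$). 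With this reordering — and keeping the leftover losses such as $\e^{5\beta\gamma_s}$ and the constants $(2C)^{s+1}$ inside exponents of the form $\e^{-(1-6\beta)\gamma}$ before summing, which the smallness of $\beta$ allows — your argument does yield the stated bound.
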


The proof of this statement follows easily from
Proposition~\ref{prop:depth-probability-1}, by observing that,
although $n$ may be an inessential deep return time (instead of an
essential deep return), the estimates still prevail and for
$\Theta>\Delta$ large enough we have
$\sum_{\gamma\geq\Theta}\e^{-(1-6\beta)\gamma}\leq 1$.

\section{The condition $D'(u_n)$}
\label{sec:D'un}

Assume that $X_0,X_1,\ldots$ is the stationary stochastic process
defined in \eqref{eq:def-process} with common d.f. $G_a$. For
$z\in[-1,1]$ the event $\{X_j(z)>u_n\}$ corresponds to the set
$f_a^{-j}\left((u_n,1]\right)$. If $n$ is sufficiently large, then
$f_a^{-1}(u_n,1]=(-\sqrt{(1-u_n)/a}, \sqrt{(1-u_n)/a}) \subset
U_{\Delta}$. We may define
\begin{equation}
\label{eq:def-Theta-1}
\Theta=\Theta(n)=\left[-\frac12\log\frac{1-u_n}{a}\right].
\end{equation}
This way, if an exceedance occurs at time $j$ then a deep return
with depth over the threshold $\Theta$ must have happened at time
$j-1$, i.e., if $X_j(z)>u_n$ then $X_{j-1}(z)=f_a^{j-1}(z)\in
U_{\Theta}$.

 Remember that the
sequence $u_n$ is such that $n(1-G_a(u_n))\rightarrow \tau$, as
$n\rightarrow \infty$, which we rewrite as $1-G_a(u_n)\asymp1/n$.
Then, by \eqref{eq:tail-g1}, we get $u_n\asymp1-1/n^2$, which
according to \eqref{eq:def-Theta-1} leads to
\begin{equation}\label{eq:estimate-Theta}
\Theta\asymp\log n,
\end{equation} meaning that $\tfrac{\Theta}{\log n}\rightarrow c$,
for some
$c>0$, as $n\rightarrow\infty$.

Observe that we are dealing with very small perturbations of $f_2$
for which $f_2^j(0)=-1$ for every $j\geq 2$. Thus, one expects that
after a deep return to the critical region (a tight vicinity of $0$)
it should take a considerable amount of time before another deep
return should occur. Since exceedances are related with the
occurrence of deep returns then one may have a fair amount of belief
that condition \eqref{eq:D'un} holds for the sequence
$X_0,X_1,\ldots$

\begin{remark}
\label{rem:independent-case} If the sequence $X_0,X_1,\ldots$ was
independent, then \eqref{eq:D'un} would follow easily since
\begin{align*}
n\sum_{j=1}^{[n/k]}P\{X_0>u_n\text{ and
}X_j>u_n\}&=n\sum_{j=1}^{[n/k]}P\{X_0>u_n\}P\{X_j>u_n\}=
n\sum_{j=1}^{[n/k]} (1-G_a(u_n))^2\\
&\leq \frac{n^2}{k}
(1-G_a(u_n))^2\xrightarrow[n\rightarrow\infty]{}\frac{\tau^2}{k}
\xrightarrow[k\rightarrow\infty]{}0.
\end{align*}

\end{remark}

Let us give some insight into the argument we use to prove that
\eqref{eq:D'un} holds for $X_0,X_1,\ldots$
\begin{enumerate}

\item We use the exponential decay of correlations (see
\eqref{eq:decay-correlations}) to compute a turning instant $T=T(n)$
such that the dependence between $X_0$ and $X_j$ with $j>T$ is
negligible. This suggests the splitting of the time interval
$\{1,\ldots,[n/k]\}$ into $\{1,\ldots,T\}$ and
$\{T+1,\ldots,[n/k]\}$, when $n$ is sufficiently large.

\item During the time interval $\{T+1,\ldots,[n/k]\}$ we use the fact
that for $j> T$ the r.v. $X_j$ is almost independent of $X_0$ and
argue like in Remark~\ref{rem:independent-case}.

\item For $j\in\{1,\ldots,T\}$ we appeal to
Corollary~\ref{cor:prob-deep-returns} to bound $P\{X_0>u_n\text{ and
}X_j>u_n\}$ and afterwards we use the fact that, for $n$ large,
$T\ll[n/k]$ to finish the proof.

\end{enumerate}

\vspace{0.3cm} \noindent\textbf{Step (1)} \vspace{0.2cm}

 Taking
$\varphi=\psi=\I_{(u_n,1]}$ in \eqref{eq:decay-correlations}, we get
\begin{align*}
\left|^{} \mu_a\{X_0>u_n\mbox{ and
}\right.&\left.X_j>u_n\}-\left[\mu_a\{X_0>u_n\}\right]^2\right|=\\&=
\left|\int \I_{(u_n,1]}\cdot\I_{(u_n,1]}\circ f_a^jd\mu_a-\left(\int
\I_{(u_n,1]}d\mu_a\right)^2\right|\\&\leq C\varsigma^j,
\end{align*}
where we may assume that $C$ is the same for all $n\in\N$, because
$||\I_{(u_n,1]}||_\infty=1$ and the total variation of
$\I_{(u_n,1]}$ is equal to $1$, for every $n\in\N$.

We compute $T=T(n)$ such that for every $j\geq T$ we have
\[
C\varsigma^j<\frac{1}{n^3}.
\]
Since $C\varsigma^j<\frac{1}{n^3}\Leftrightarrow
j>\frac{1}{\log\varsigma^{-1}}(3\log n+\log C)$, we simply take, for
$n$ sufficiently large,
\begin{equation}
\label{eq:choice-T} T=\frac{4}{\log\varsigma^{-1}}\log n.
\end{equation}
For fixed $k$ and $n$ sufficiently large, we have that $T<[n/k]$.
Hence, we may write
\begin{multline*}
n\sum_{j=1}^{[n/k]}P\{X_0>u_n\mbox{ and }X_j>u_n\}=\\
=n\sum_{j=1}^{T}P\{X_0>u_n\mbox{ and
}X_j>u_n\}+n\sum_{j=T+1}^{[n/k]}P\{X_0>u_n\mbox{ and }X_j>u_n\}.
\end{multline*}
In step (2) below we deal with the second term in the sum, leaving
the first term for step (3).

\vspace{0.3cm} \noindent\textbf{Step (2)} \vspace{0.2cm}

Let us show that
$\limsup_{n\rightarrow\infty}n\sum_{j=T+1}^{[n/k]}P\{X_0>u_n\mbox{
and }X_j>u_n\}\rightarrow 0$, as $k\rightarrow \infty$. By choice of
$T$ we have
\begin{align*}
n\sum_{j=T+1}^{[n/k]}P\{X_0>u_n\mbox{ and }X_j>u_n\}&\leq n\left
((1-G_a(u_n))^2+\frac{1}{n^3}\right)[n/k]\\
&\leq \frac{n^2}k(1-G_a(u_n))^2+\frac{n^2}{kn^3}.
\end{align*}
Now,
$\frac{n^2}k(1-G_a(u_n))^2+\frac{n^2}{kn^3}\xrightarrow[n\rightarrow\infty]{}
\frac{\tau^2}k\xrightarrow[k\rightarrow\infty]{}0$ and the result
follows.

\vspace{0.3cm} \noindent\textbf{Step (3)} \vspace{0.2cm}

We are left with the burden of controlling the term
$n\sum_{j=1}^{T}P\{X_0>u_n\mbox{ and }X_j>u_n\}$. We begin with the
following lemma that will enable us to bound the number of
exceedances occurring during the time period $\{1,\ldots,T\}$. In
what follows, we are always assuming that $n$ is large enough so
that $\Theta>\Delta$.
\begin{lemma}
\label{lem:time-between-exceedances} If a deep return occurs at time
$t$ (with depth over the threshold $\Theta$), then the next deep
return can only occur after $t+\Theta/2$.
\end{lemma}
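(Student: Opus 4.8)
The plan is to leverage the bound-period structure from Subsection~\ref{subsec:bound-period-estimates}. The key observation is that a deep return occurring at time $t$ means $f_a^t(x) \in U_\Theta$, so $f_a^t(x) \in I_m$ for some $|m| \geq \Theta$. By property~\eqref{item:bound-period-bounds-1d} of the bound period, this return initiates a bound period $p$ with $\tfrac{1}{2}|m| \leq p \leq 3|m|$, hence $p \geq \tfrac{1}{2}\Theta$. The crucial fact, already noted in Remark~\ref{rem:upper-bound-sn}, is that during a bound period the orbit cannot experience an essential return. But here we need the slightly stronger-looking statement about \emph{any} deep return, not just essential ones; however, a deep return (depth above $\Theta \geq \Delta$) that occurs \emph{inside} a bound period would be a bound return time, and the standard Benedicks-Carleson bound-period estimates control how close the orbit can come to the critical point during binding.

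So the main steps I would carry out are as follows. First, suppose a deep return occurs at time $t$, with $f_a^t(x) \in I_m$, $|m| \geq \Theta$, so the ensuing bound period has length $p \geq \tfrac{1}{2}\Theta$. Second, I would argue that no deep return can occur at any time $s$ with $t < s < t + p$: indeed, for such $s$ the orbit is still bound to the critical orbit, so by \eqref{eq:def-bound-period} we have $|f_a^{s-t}(f_a^t(x)) - f_a^{s-t}(0)| < \e^{-\beta(s-t)}$, i.e., $|f_a^s(x) - f_a^{s-t}(0)| < \e^{-\beta(s-t)}$. Combining this with the basic assumption \eqref{eq:BA}, which gives $|f_a^{s-t}(0)| \geq \e^{-\alpha\sqrt{s-t}}$, and recalling $\beta = 14\alpha$, the triangle inequality forces $|f_a^s(x)| \geq \e^{-\alpha\sqrt{s-t}} - \e^{-\beta(s-t)}$, which for $s-t \geq 1$ and $\alpha$ small is bounded below by something like $\tfrac{1}{2}\e^{-\alpha\sqrt{s-t}} \geq \e^{-2\alpha\sqrt{s-t}}$. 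Since $\Theta \asymp \log n$ while $\sqrt{s-t} \leq \sqrt{p} \leq \sqrt{3|m|}$, for $n$ (hence $\Theta$) large this lower bound is much larger than $\e^{-\Theta}$, so $f_a^s(x) \notin U_\Theta$ and $s$ is not a deep return. Third, since $p \geq \Theta/2$, the next deep return can only occur at some time $\geq t + p \geq t + \Theta/2$, which is the claim.

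The step I expect to be the main obstacle is the quantitative comparison in the second step: one must check carefully that the lower bound $\e^{-2\alpha\sqrt{s-t}}$ on $|f_a^s(x)|$ genuinely dominates $\e^{-\Theta}$ uniformly for all $s$ in the bound period. This hinges on the relation $\Theta \asymp \log n$ from \eqref{eq:estimate-Theta} together with $s - t \leq p \leq 3|m|$; one would need to track the dependence of $|m|$ on $\Theta$ (we only know $|m| \geq \Theta$, so $p$ could in principle be large, but $\sqrt{p}$ still grows only like $\sqrt{|m|}$ while we are comparing against $\e^{-\Theta}$, and the point is that $\alpha$ is small and $\beta = 14\alpha$, so the exponential-growth regaining mechanism keeps the orbit away from $U_\Theta$). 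A cleaner route that sidesteps this entirely is simply to invoke the definition of the partition and Remark~\ref{rem:upper-bound-sn}: during a bound period there is no essential return, and by the bookkeeping conventions a deep return of depth above $\Theta \geq \Delta$ that is registered by the algorithm is an essential return situation; one then only needs that $p \geq \Theta/2$, which is immediate. I would present the short combinatorial argument as the main line and relegate the analytic estimate to a remark.
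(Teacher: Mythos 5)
Your central analytic computation is, on the range where it is actually needed, the same as the paper's: combine the binding estimate \eqref{eq:def-bound-period} with \eqref{eq:BA} and the triangle inequality to get $|f_a^{t+j}(x)|\geq \e^{-\alpha\sqrt j}-\e^{-\beta j}$, and check that this stays above $\e^{-\Theta}$. The paper does exactly this, but only for $j\leq \Theta/2$, which is all the lemma asks for: since $x\in U_\Theta$ implies $p(x)\geq\Theta/2$, the binding estimate is valid on this window, and there $\alpha\sqrt j\leq\alpha\sqrt{\Theta/2}\ll\Theta$, so the comparison with $\e^{-\Theta}$ is immediate for large $n$. Your step (2), however, claims the exclusion of deep returns over the \emph{entire} bound period, and the justification you sketch does not hold: you only know $|m|\geq\Theta$, so $p$ can be as large as $3|m|$ with $|m|$ arbitrarily large compared with $\Theta$; once $j>(\Theta/\alpha)^2$ the lower bound $\e^{-\alpha\sqrt j}-\e^{-\beta j}$ falls below $\e^{-\Theta}$, and neither \eqref{eq:BA} nor the binding estimate then prevents a deep bound return late in a long bound period. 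This is precisely why the paper proves Lemma~\ref{lem:no-bound-retruns} only for $j\leq\min\{p(x),T\}$ with $T\asymp\log n$, not for the whole bound period. So the obstacle you flagged is real, and the parenthetical reassurance that $\sqrt p$ grows only like $\sqrt{|m|}$ is not a fix; the fix is simply to restrict to $j\leq\Theta/2$, which already yields the statement and reproduces the paper's proof.

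The ``cleaner route'' you propose to promote to the main line has a genuine gap. A deep return in the sense of this lemma is a visit of the actual orbit to $U_\Theta$ (this is what produces an exceedance), and such a visit need not be an essential return time or an essential return situation: it can be a bound return or an inessential return. Hence the bookkeeping fact recorded in Remark~\ref{rem:upper-bound-sn}, that essential returns cannot occur during a bound period, does not exclude a deep return within $\Theta/2$ iterates of time $t$; ruling out deep \emph{bound} returns is exactly what the analytic estimate does (here for $j\leq\Theta/2$, and in Lemma~\ref{lem:no-bound-retruns} for $j\leq T$), and it cannot be obtained combinatorially for free. Keep the triangle-inequality argument, restricted to $j\leq\Theta/2$, as the proof, and drop the combinatorial shortcut.
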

\begin{proof}
For every $x\in U_\Theta$, the bound period associated to $x$ is
such that $p(x)\geq \Theta/2$, by
Section~\ref{subsec:bound-period-estimates}~
\eqref{item:bound-period-bounds-1d}. For all $j\leq [\Theta/2]$ we
have
\begin{align*}
\left|f_a^{j}(x)\right|&\geq \left|f_a^{j}(0)\right|-\e^{-\beta
j}\overset{\eqref{eq:BA}}{\geq}\e^{-\alpha \sqrt j}-\e^{-\beta
j}\geq \e^{-\alpha j}\left(1-\e^{(\alpha-\beta)j}\right)\\
&\geq \e^{-\alpha j}\left(1-\e^{(\alpha-\beta)}\right),\,
\text{since $\alpha-\beta<0$}\\
&\geq \e^{-\alpha \Theta/2}\left(1-\e^{(\alpha-\beta)}\right),\,
\text{since $j\leq\Theta/2$}\\
&\geq \e^{-\alpha\Theta},\, \text{if
$n$ is large enough so that $1-\e^{\alpha-\beta}
\geq \e^{-\alpha\Theta/2}$}\\
&\geq \e^{-\Theta},\, \text{since $\alpha<1$.}
\end{align*}\end{proof}
As a consequence of Lemma~\ref{lem:time-between-exceedances} we have
that the maximum number of exceedances up to time $T$ is at most
$2T/\Theta$.

Next lemma shows that in the bound period following a deep return
over the level $\Theta$, there cannot occur deep bound returns
before time $T$.
\begin{lemma}
\label{lem:no-bound-retruns} For every $a\in\B$, if $n$ is
sufficiently large then for every $x\in U_\Theta$ we have that
$f_a^j(x)\notin U_\Theta$ for all $j\leq \min\{p(x),T\}$.
\end{lemma}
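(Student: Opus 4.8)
The plan is to push the argument of Lemma~\ref{lem:time-between-exceedances} over the longer time span $\{1,\ldots,\min\{p(x),T\}\}$. Since $U_\Theta=(-\e^{-\Theta},\e^{-\Theta})$, the assertion $f_a^j(x)\notin U_\Theta$ is the same as $|f_a^j(x)|\geq\e^{-\Theta}$, so I fix $x\in U_\Theta$ and bound $|f_a^j(x)|$ from below for $1\leq j\leq\min\{p(x),T\}$. Throughout this range $x$ is still bound to the critical orbit, so combining the bound-period definition \eqref{eq:def-bound-period} with the basic assumption \eqref{eq:BA} gives
\[
|f_a^j(x)|\geq |f_a^j(0)|-|f_a^j(x)-f_a^j(0)|>\e^{-\alpha\sqrt j}-\e^{-\beta j}.
\]

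Next I would simplify the right-hand side. Since $\beta=14\alpha$ and $j\geq\sqrt j$ for $j\geq1$, we have $\e^{-\beta j}\leq\e^{-14\alpha\sqrt j}$, whence
\[
\e^{-\alpha\sqrt j}-\e^{-\beta j}\geq\e^{-\alpha\sqrt j}\bigl(1-\e^{-13\alpha\sqrt j}\bigr)\geq\e^{-\alpha\sqrt j}\bigl(1-\e^{-13\alpha}\bigr)\geq\e^{-\alpha\sqrt T}\bigl(1-\e^{-13\alpha}\bigr),
\]
the last inequality using $j\leq T$. It then remains to check that, for $n$ large,
\[
\e^{-\alpha\sqrt T}\bigl(1-\e^{-13\alpha}\bigr)\geq\e^{-\Theta},\qquad\text{equivalently}\qquad\Theta\geq\alpha\sqrt T+\log\frac{1}{1-\e^{-13\alpha}}.
\]
But $T\asymp\log n$ by \eqref{eq:choice-T}, hence $\sqrt T\asymp\sqrt{\log n}$, whereas $\Theta\asymp\log n$ by \eqref{eq:estimate-Theta}; therefore $\Theta-\alpha\sqrt T\to\infty$ and the inequality holds for all $n$ large enough. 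Reading the three displays backwards yields $|f_a^j(x)|\geq\e^{-\Theta}$, i.e.\ $f_a^j(x)\notin U_\Theta$, for every $1\leq j\leq\min\{p(x),T\}$, which is the assertion of the lemma.

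The step I expect to carry the weight is precisely this comparison of $\Theta$ with $\sqrt T$. The time window $T$ over which control is needed is itself a multiple of $\log n$ (possibly a large one, when the correlation-decay rate $\varsigma$ is close to $1$), hence of the same order as $\Theta$; so one cannot afford the coarser bound $\e^{-\beta j}\leq\e^{-\alpha j}$ used in Lemma~\ref{lem:time-between-exceedances}, which would leave $\alpha T\asymp\log n$ competing with $\Theta$ and force an artificial relation between $\alpha$ and $\varsigma$. Keeping the square root furnished by \eqref{eq:BA} makes the relevant quantity $\alpha\sqrt T$ of size only $\sqrt{\log n}$, negligible against $\Theta\asymp\log n$, so the estimate closes for every $a\in\B$ with no further hypotheses.
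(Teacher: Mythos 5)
Your proof is correct, and it reaches the conclusion by a cleaner route than the paper. Both arguments share the same core: on the bound period, \eqref{eq:def-bound-period} together with \eqref{eq:BA} gives $|f_a^j(x)|\geq \e^{-\alpha\sqrt j}-\e^{-\beta j}$, and the lemma then reduces to the observation that $\alpha\sqrt T\asymp\sqrt{\log n}$ is negligible against $\Theta\asymp\log n$ (by \eqref{eq:choice-T} and \eqref{eq:estimate-Theta}), exactly the point you emphasize at the end. Where you diverge is in how the bound is made uniform over $1\leq j\leq\min\{p(x),T\}$: the paper introduces the auxiliary function $h(y)=\e^{-\alpha\sqrt y}-\e^{-\beta y}$, which is not monotone near $y=1$, and therefore splits the range at a constant $J=J(\alpha,\beta)$, treating $j\leq J$ by a continuity argument (conditions on $D=\e^{-\alpha\sqrt J}$ and $|f_a^j(x)-f_a^j(0)|<D/2$) and $j>J$ by monotonicity of $h$ so that the minimum sits at $j=T$. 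Your factoring
\[
\e^{-\alpha\sqrt j}-\e^{-\beta j}\;\geq\;\e^{-\alpha\sqrt j}\bigl(1-\e^{-(\beta-\alpha)\sqrt j}\bigr)\;\geq\;\bigl(1-\e^{-13\alpha}\bigr)\e^{-\alpha\sqrt T}
\]
(using only $j\geq\sqrt j$ and $\beta=14\alpha>\alpha$) sidesteps the non-monotonicity entirely, so no case split, no constants $J$ and $D$, and no continuity hypothesis are needed; the price is nil, since the final comparison $\Theta\geq\alpha\sqrt T+\log\frac{1}{1-\e^{-13\alpha}}$ holds for large $n$ exactly as in the paper's condition $\e^{-\alpha\sqrt T}-\e^{-\beta T}>\e^{-\Theta}$. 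One shared boundary technicality: at $j=p(x)$ the estimate \eqref{eq:def-bound-period} is stated only for $k<p$, and both you and the paper apply it up to $j=\min\{p(x),T\}$; this does not distinguish your argument from the published one.
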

\begin{proof}
Consider the map $h:[1,\infty)\to\R$ given by $h(y)=\e^{-\alpha\sqrt
y}-\e^{-\beta y}$. There is $J=J(\alpha,\beta)\in\N$ such that
$h'(y)<0$ for every $y\geq J$. Let $D=\e^{-\alpha\sqrt J}$.

Having in mind that by \eqref{eq:estimate-Theta} and
\eqref{eq:choice-T} we have $\Theta=\Theta(n)\asymp\log n$ and
$T\asymp\log n$, respectively, assume that $n$ is large enough so
that:

\begin{enumerate}
  \item \label{item:proof-cond-1}$\e^{-\Theta}<D/2$;

  \item \label{item:proof-cond-2}by continuity of $f_a$, for every $x\in U_\Theta$ and all
  $j\leq J$ we have $|f_a^j(x)-f_a^j(0)|<D/2$;

  \item \label{item:proof-cond-3}$\e^{-\alpha
\sqrt T}-\e^{-\beta T}>\e^{-\Theta}$.
\end{enumerate}

Let $x\in U_\Theta$ and consider $j\leq \min\{p(x),J\}$. By
assumption \eqref{item:proof-cond-2} we have
$|f_a^j(x)-f_a^j(0)|<D/2$. Using \eqref{eq:BA} and hypothesis
\eqref{item:proof-cond-1} we get
\[
|f_a^j(x)|\geq |f_a^j(0)|-|f_a^j(x)-f_a^j(0)|\geq D-D/2\geq
D/2>e^{-\Theta}.
\]

Now, let $x\in U_\Theta$ and consider $J<j\leq \min\{p(x),T\}$.
Using the definition of bound period, \eqref{eq:BA} and condition
\eqref{item:proof-cond-3} we have
\[
|f_a^j(x)|\geq |f_a^j(0)|-|f_a^j(x)-f_a^j(0)|\geq \e^{-\alpha\sqrt
j}-\e^{-\beta j}\geq \e^{-\alpha\sqrt T}-\e^{-\beta T}>
\e^{-\Theta},
\]
where the third inequality derives from the fact that on
$[J,\infty)$ the function $h$ is decreasing.
\end{proof}
As a consequence, if there is a deep return at time $t$, then we
cannot have bound returns during the time period $(t,t+T]$. This
way, we may use Corollary~\ref{cor:prob-deep-returns} to estimate
$|U_{\Theta}\cap f_a^{-j}U_\Theta|$, for $j\leq T$.

Observe that for $n$ large enough, we have
\begin{align*}
P\{X_0>u_n\text{ and }X_j>u_n\}&=\mu_a\left\{(u_n,1]\cap
f_a^{-j}(u_n,1]\right\}\\&=\mu_a \left\{f_a^{-1}(u_n,1]\cap
f_a^{-(j+1)}(u_n,1]\right\},
\end{align*}
and $f_a^{-1}(u_n,1]\cap f_a^{-j}\left(f_a^{-1}(u_n,1]\right)\subset
U_{\Theta}\cap f_a^{-j}U_{\Theta}.$  In what follows we will use
``const'' to denote several positive constants independent of $n$.

Note that
\[
U_{\Theta}\cap
f_a^{-j}U_\Theta\subset\bigcup_{s=0}^{2j/\Theta}\bigcup_{v=s}^j
\bigcup_{\gamma_0\geq \Theta} \bigcup_{\gamma_1\geq \Theta}
B_{\gamma_0,\gamma_1}^{v,s}(j).
\]
Hence, by Corollary~\ref{cor:prob-deep-returns}, we have
\begin{align*}
|U_{\Theta}\cap f_a^{-j}U_\Theta|&\leq
\sum_{s=0}^{2j/\Theta}\sum_{v=s}^j\sum_{\gamma_0\geq \Theta}
\sum_{\gamma_1\geq \Theta}\binom{v}{s}
\e^{-(1-6\beta)(\gamma_0+\gamma_1)}\leq
\mbox{const}\sum_{s=0}^{2T/\Theta}\sum_{v=s}^T\binom{v}{s}
\e^{-(1-6\beta)2\Theta}\\
&\leq \mbox{const}\sum_{s=0}^{2T/\Theta}\sum_{v=s}^T\binom{T}{s}
\e^{-(1-6\beta)2\Theta}\leq
\mbox{const}\sum_{s=0}^{2T/\Theta}T\binom{T}{s}
\e^{-(1-6\beta)2\Theta}.
\end{align*}
At this point, we estimate $2T/\Theta$. Recalling
\eqref{eq:estimate-Theta} and \eqref{eq:choice-T}, one easily gets
that there exists a constant $C_1>0$ such that $2T/\Theta\leq C_1$,
for $n$ sufficiently large. So, to proceed with the estimation
$|U_{\Theta}\cap f_a^{-j}U_\Theta|$, assume that $n$ is sufficiently
large so that $2T/\Theta\leq C_1$ and $C_1\ll T$. Then,
\begin{align*}
|U_{\Theta}\cap f_a^{-j}U_\Theta|&\leq
\mbox{const}\sum_{s=0}^{2T/\Theta}T\binom{T}{s}
\e^{-(1-6\beta)2\Theta}\leq
\mbox{const}\sum_{s=0}^{2T/\Theta}T\binom{T}{C_1}
\e^{-(1-6\beta)2\Theta}\\
&\leq \mbox{const}\frac{2T}{\Theta}T\binom{T}{C_1}
\e^{-(1-6\beta)2\Theta}\leq \mbox{const}T^{C_1+1}
\e^{-(1-6\beta)2\Theta} .
\end{align*}

So far we managed to obtain an estimate for $|U_{\Theta}\cap
f_a^{-j}U_\Theta|$. Let us see how too derive one for
$P\{X_0>u_n\text{ and }X_j>u_n\}\leq\mu_a\left\{U_{\Theta}\cap
f_a^{-j}U_\Theta\right\}$. By assertion~\eqref{item:acim-density-1}
in Section~\ref{subsection:existence-inv-measures} we have
\begin{align*}
\mu_a\left\{U_{\Theta}\cap
f_a^{-j}U_\Theta\right\}&=\int_{U_{\Theta}\cap
f_a^{-j}U_\Theta}\rho_1^a(x)dx+\int_{U_{\Theta}\cap
f_a^{-j}U_\Theta}\rho_2^a(x)dx\\
&\leq \mbox{const}.|U_{\Theta}\cap
f_a^{-j}U_\Theta|+\sum_{j=1}^{\Theta^2}\int_{U_{\Theta}\cap
f_a^{-j}U_\Theta}\tfrac{(1.9)^{-j}}{\sqrt{|x-f^j_a(0)|}}dx\\
&\quad+\sum_{j=\Theta^2}^{\infty} \int_{U_{\Theta}\cap
f_a^{-j}U_\Theta}\tfrac{(1.9)^{-j}}{\sqrt{|x-f^j_a(0)|}}dx.
\end{align*}
Using \eqref{eq:BA} we get $|f_a^j(0)|>\e^{-\alpha\Theta}$, for all
$j\leq\Theta^2$. Thus, for all $x\in U_{\Theta}$ and $\Theta$ large
enough, we have
$\sqrt{|x-f^j_a(0)|}>\sqrt{\e^{-\alpha\Theta}-\e^{-\Theta}}>
1/2.\e^{-\alpha\Theta/2}$, which implies that
$\tfrac1{\sqrt{|x-f^j_a(0)|}}\leq2\e^{\alpha\Theta/2}\leq2
\e^{\beta\Theta}$ (recall that $\beta>\alpha$). Consequently, for
sufficiently large $\Theta$,
\begin{align*}
\mu_a\left\{U_{\Theta}\cap f_a^{-j}U_\Theta\right\}&\leq
\mbox{const}.|U_{\Theta}\cap
f_a^{-j}U_\Theta|+\mbox{const}.|U_{\Theta}\cap
f_a^{-j}U_\Theta|.\e^{\beta\Theta}+\mbox{const}.\e^{\Theta^2\log1.9}\\
&\leq \mbox{const}.T^{C_1+1} \e^{-(1-7\beta)2\Theta}.
\end{align*}

Now, since $\e^{-\Theta}\leq \mbox{const}(1-G_a(u_n))$, we finally
conclude:
\begin{align*}
n\sum_{j=1}^TP\{X_0>u_n\text{ and }X_j>u_n\}&\leq \mbox{const}\cdot
n\sum_{j=1}^TT^{C_1+1}(1-G_a(u_n))^{2(1-7\beta)}\\
&\leq \mbox{const}\cdot n T^{C_1+2}(1-G_a(u_n))^{2(1-7\beta)}\\
&\leq \mbox{const}\cdot n (\log
n)^{C_1+2}(1-G_a(u_n))^{2-14\beta}\\
&\leq \mbox{const}\cdot [n(1-G_a(u_n))]^{3/2}
(1-G_a(u_n))^{1/2-14\beta},
\end{align*}
for sufficiently large $n$. The result follows because
\[
\lim_{n\rightarrow\infty} [n(1-G_a(u_n))]^{3/2}
(1-G_a(u_n))^{1/2-14\beta}=\tau^{3/2}\cdot0=0.
\]

\section{Simulation Study}
\label{sec:simulation}

In this section we present a small simulation study illustrating the
finite sample behavior of the normalized $M_n$, defined in
\eqref{eq:def-maximum}, for the Benedicks-Carleson quadratic map
$f_2(x)=1-2x^2$. We have that $P\{X_0\leq
x\}=G_2(x)=1/2+\arcsin(x)/\pi$. According to
Theorem~\ref{th:A-tipo-1}, the normalizing sequence is
$a_n=(1-\cos(\pi/n))^{-1},$ for each $n\in\N,$ and the theoretical
limiting distribution for $a_n(M_n-1)$ is
$$H(x)=\begin{cases}
\e^{-(-x)^{1/2}}&,x\leq0\\
1&,x>0
\end{cases}.$$

We performed the following experiment. We picked at random
(according to the d.f. $G_2$) a point $z$ in the interval $[-1,1]$,
computed its orbit up to time $n$ and calculated $M_n(z)=\max\{z,
f_2(z), \ldots, f_2^{n-1}(z)\}$. We repeated the process $m$ times
to obtain a sample $\{M_n(z_1), \ldots,M_n(z_m)\}$ and approximated,
for certain values of $x$, $P\{a_n(M_n-b_n)\leq x\}$ by
\begin{equation}\label{eq:simul-approx}
\frac1m\sum_{i=1}^m\I_{\{a_n(M_n(z_i)-b_n)\leq x\}},
\end{equation}
where $\I_{\{a_n(M_n(z_i)-b_n)\leq x\}}=\begin{cases}
1&\mbox{if }a_n(M_n(z_i)-b_n)\leq x\\
0&\mbox{if }a_n(M_n(z_i)-b_n)> x
\end{cases}$, for each $1\leq i\leq m$.

In Table~\ref{tab:simulation} we present the results obtained by
realizing the above experiment, considering different values of $x$,
$n$ and $m$ and compare them with the theoretical limiting ones
given by $H(x)$.

\begin{table}[htbp]
  \centering
\begin{tabular}{|c|c|c|c|c|c|c|}\hline
 & &\multicolumn{2}{|c|}{$n=1000$}&\multicolumn{2}{|c|}{$n=10000$}&
$n=20000$\\ \cline{3-7}
$x$&$H(x)$&$m=1000$&$m=10000$&$m=10000$&$m=20000$&$m=20000$\\\hline
-0.001 & 0.9689 & 0.976 & 0.9671 & 0.9719 & 0.9677 & 0.9708\\
\hline

-0.01 & 0.9048 & 0.894 & 0.9079 & 0.9056 & 0.9076 &0.9052\\ \hline

-0.1 & 0.7289 & 0.724 & 0.7263 & 0.7323& 0.7303 & 0.7335\\ \hline

-0.3 & 0.5783 & 0.569 & 0.5773 & 0.5823 & 0.5840 & 0.5813\\
\hline

-0.5 & 0.4931 & 0.491 & 0.4906 & 0.5012 & 0.4984 & 0.4941\\ \hline

-0.7 & 0.4332 & 0.407 & 0.4272 & 0.4403 & 0.4374 & 0.4338\\ \hline

-1 & 0.3679 & 0.388 & 0.3631 & 0.3663 & 0.3729 & 0.3678\\ \hline

-3 & 0.1769 & 0.164 & 0.1731 & 0.1748 & 0.1833 & 0.1729\\ \hline

-5 & 0.1069 & 0.124 & 0.1024 & 0.1092 & 0.1108 & 0.1056\\ \hline

-8 & 0.0591 & 0.059 & 0.0510 & 0.0557 & 0.0617 & 0.0580\\ \hline

-10 & 0.0423 & 0.049 & 0.0350 & 0.0435 & 0.0438 & 0.0414\\ \hline

-30 & 0.0042 & 0.002 & 0.0031 & 0.0033 & 0.0048 & 0.0041\\
\hline

-50 & 0.00085 & 0.001 & 0.0007 & 0.0009 & 0.0007 & 0.0009\\ \hline

\end{tabular}
 \caption{Simulation results}
  \label{tab:simulation}
\end{table}

As one may verify the results of the experiment are quite close to
the asymptotic theoretical ones and there is a general tendency of
getting better as $n$ increases which is precisely the behavior we
were expecting. It is also noticeable that there is an improvement
when $m$ increases, which is also predictable since our
approximation \eqref{eq:simul-approx} gets to be more accurate.

\subsection*{Acknowledgments}
We wish to thank M. P. Carvalho for valuable observations and
relevant suggestions. We are grateful to J. F. Alves, M. A. Brito
and R. Leadbetter for fruitful conversations and comments. We are
also grateful to an anonymous referee for his comments and important
suggestions that helped to improve the paper.

\newpage

\end{document}